\documentclass[11pt]{article}
\usepackage{amsmath,amssymb,amsthm,mathtools,geometry,bm,url,tikz,xcolor}
\geometry{margin=1in}
\usetikzlibrary{positioning,arrows}

\usepackage{booktabs}

\usepackage[table]{xcolor}
\definecolor{toprow}{gray}{0.82}
\definecolor{evenrow}{gray}{0.93}

\title{Asymptotically optimal approximate Hadamard matrices}
\author{Boris Alexeev \and John Jasper\thanks{Department of Mathematics and Statistics, Air Force Institute of Technology, Wright-Patterson AFB, Ohio, USA} \and Dustin G.\ Mixon\thanks{Department of Mathematics, The Ohio State University, Columbus, Ohio, USA} \thanks{Translational Data Analytics Institute, The Ohio State University, Columbus, Ohio, USA}}
\date{}

\theoremstyle{plain}
\newtheorem{theorem}{Theorem}
\newtheorem{lemma}[theorem]{Lemma}
\newtheorem{proposition}[theorem]{Proposition}

\theoremstyle{definition}

\newtheorem{problem}[theorem]{Problem}

\begin{document}
\maketitle

\begin{abstract}
In this paper, we study approximate Hadamard matrices, that is, well-conditioned $n\times n$ matrices with all entries in $\{\pm1\}$.
We show that the smallest-possible condition number goes to~$1$ as $n\to\infty$, and we identify some explicit infinite families of approximate Hadamard matrices.
\end{abstract}

\section{Introduction}

Given a real matrix $A$, let $\kappa(A)\in[1,\infty]$ denote the \textit{condition number} of $A$: 
\[
\kappa(A)=\frac{\sigma_{\max}(A)}{\sigma_{\min}(A)},
\]
where $\sigma_{\min}(A)$ and $\sigma_{\max}(A)$ denote the smallest and largest singular values of $A$, respectively.
(If $\sigma_{\min}(A)=0$, we put $\kappa(A)=\infty$.)
For each positive integer $n$, let $\kappa(n)$ denote the smallest possible condition number of an $n\times n$ matrix with all entries in $\{\pm1\}$, that is,
\[
\kappa(n)
:=\min_{A\in\{\pm1\}^{n\times n}}\kappa(A).
\]
Observe that $\kappa(n)\geq1$, with equality precisely when there exists a Hadamard matrix of order $n$.

Recently, Dong and Rudelson~\cite{DongR:24} showed that the sequence $\kappa(n)$ is uniformly bounded, and Steinerberger~\cite{Steinerberger:24} later showed this still holds when restricting to circulant matrices.
Steinerberger has since promoted this problem of \textit{approximate Hadamard matrices}, for example, in~\cite{Manskova:25,Steinerberger:notes}.
(In fact, he personally introduced the third author to the problem over a drink at the 5th Biennial Meeting of the Pacific Northwest Section of SIAM.)
Here are the two main questions posed by Steinerberger:
\begin{itemize}
\item[1.] Does it hold that $\displaystyle\lim_{n\to\infty}\kappa(n)=1$?
\item[2.] Are there explicit infinite families of approximate Hadamard matrices?
\end{itemize}
In this paper, we resolve both questions in the affirmative, and we pose a few follow-up questions.

In the next section, we establish that there exists $\alpha>0$ such that $ \kappa(n)\leq1+\frac{1}{n^\alpha}$ for all sufficiently large $n$.
As part of our proof, we resolve a $10$-year-old problem of Jaming and Matolcsi~\cite{JamingM:15} concerning flat orthogonal matrices.
Next, Section~\ref{sec.lower bound} uses ideas from Ramsey theory to obtain a lower bound of the form $\kappa(n)\geq 1+\frac{c\log n}{n}$ for all sufficiently large $n\not\equiv 0\bmod 4$.
In Section~\ref{sec.explicit families}, we identify a few infinite families of approximate Hadamard matrices, we describe our (shockingly productive) use of AI in Section~\ref{sec.ai}, and we conclude with a discussion in Section~\ref{sec.discussion}.

\section{Upper bound}

In this section, we construct approximate Hadamard matrices of order $n$ with condition number approaching $1$ as $n\to\infty$.
Our approach follows the schematic
\begin{center}
\begin{tikzpicture}[
  node distance=4cm,
  box/.style={
    rectangle,
    draw,
    rounded corners,
    minimum width=2.7cm,
    minimum height=1.7cm,
    align=center
  },
  arrow/.style={
    ->,
    shorten >=8pt,
    shorten <=8pt,
    line width=0.9pt,
    >=stealth'
  }
]
\node[box] (craigen) {known\\Hadamard\\matrices};
\node[box, right=3.5cm of craigen] (flat) {flat\\orthogonal\\matrices};
\node[box, right=3.5cm of flat] (approx) {approximate\\Hadamard\\matrices};
\draw[arrow] (craigen) -- node[above, align=center, font=\footnotesize]{submatrix\\orthogonalization} (flat);
\draw[arrow] (flat) -- node[above, align=center, font=\footnotesize]{random\\rounding} (approx);
\end{tikzpicture}
\end{center}
and the result is the following:

\begin{theorem}
\label{thm.main result}
There exists $\alpha>0$ such that $\displaystyle \kappa(n)\leq1+\frac{1}{n^\alpha}$ for all sufficiently large $n$.
\end{theorem}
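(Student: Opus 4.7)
The plan is to follow the three-step pipeline suggested by the schematic: (i) select a known Hadamard matrix of order close to $n$; (ii) convert it into a flat orthogonal matrix of order exactly $n$; (iii) randomly round that orthogonal matrix to $\{\pm1\}$ entries. For step (i), I would invoke a density theorem for Hadamard orders (e.g.\ Craigen-type doubling or Livinskyi-style asymptotics) to locate a Hadamard matrix $H$ of some order $N$ with $n \le N \le n + O(n^{1-\delta})$ for an absolute constant $\delta>0$. This is the off-the-shelf input to the construction.

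For step (ii), the target is an orthogonal matrix $Q\in O(n)$ whose entries are \emph{uniformly flat}, meaning $Q_{ij} = (\pm 1 + \eta_{ij})/\sqrt{n}$ with $\max_{i,j}|\eta_{ij}|\le n^{-\gamma}$ for some $\gamma>0$. The natural template is ``submatrix orthogonalization'': carve an $n\times n$ near-Hadamard block $M$ out of $H$ (possibly after a structured block decomposition so that the stray rows/columns interact nicely with $M$) and take the polar factor $Q = M(M^{\top}M)^{-1/2}$. A raw submatrix of $H$ is far from flat after polar projection, so the delicate point is to arrange $M$ so that $M^{\top}M - \lambda I$ is small in operator norm; this is precisely what is needed to keep $Q$ entrywise close to $M/\sqrt{N}$, and it is this quantitative version of the construction that resolves the Jaming--Matolcsi problem flagged in the introduction.

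For step (iii), given a flat $Q$ from step (ii), rescale so that $|\sqrt{n}\,Q_{ij}|\le 1$ and draw a random $A\in\{\pm1\}^{n\times n}$ with independent entries satisfying $\mathbb{E}[A_{ij}] = \sqrt{n}\,Q_{ij}$, so that $\operatorname{Var}(A_{ij}) = O(|\eta_{ij}|)$. A modern matrix concentration bound (Bandeira--van Handel or a matrix Bernstein inequality) then gives $\|A - \sqrt{n}\,Q\|_{\mathrm{op}} = O(n^{1/2 - \gamma/2})$ with high probability. Since $\sqrt{n}\,Q$ has all singular values equal to $\sqrt{n}$, Weyl's inequality yields $\sigma_{\max}(A), \sigma_{\min}(A) = \sqrt{n}\,(1 \pm O(n^{-\gamma/2}))$, and hence $\kappa(A) \le 1 + O(n^{-\gamma/2})$ for some realization $A$, proving the theorem with $\alpha$ any positive constant strictly less than $\gamma/2$.

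The main obstacle is step (ii): producing flat orthogonal matrices with a quantitative flatness exponent $\gamma>0$ at every sufficiently large order. Even qualitative existence was open for a decade, and what governs the final exponent $\alpha$ is the interplay between the flatness rate $\gamma$ achievable in step (ii) and the Hadamard-order gap $N-n = O(n^{1-\delta})$ from step (i); any mismatch between these two rates would force $\alpha$ toward zero. Steps (i) and (iii) are, by contrast, largely black-box applications of classical density results and standard random-matrix concentration.
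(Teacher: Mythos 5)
Your pipeline matches the paper's exactly in architecture --- Hadamard matrix of nearby order (Du--Jiang/Craigen-type density), then a flat orthogonal matrix of order exactly $n$, then random rounding analyzed via matrix Bernstein and Weyl --- and your steps (i) and (iii) are essentially the paper's proof of Theorem~\ref{thm.main result} modulo bookkeeping (the paper centers the rounding at $M/\|M\|_{\max}$ and gets the variance parameter $n-1/u(n)^2$ from row normalization rather than from uniform entrywise flatness, but that is cosmetic). The problem is step (ii), which you yourself flag as the main obstacle and then do not resolve; unfortunately, the specific mechanism you propose cannot work as stated. If $M$ is \emph{any} $n\times n$ submatrix of $\frac{1}{\sqrt{m}}H$ with $m=n+k$ and complementary off-diagonal block $B\in\mathbb{R}^{k\times n}$, then $M^\top M=I-B^\top B$, and since $B$ has rank at most $k$ while $\|B\|_F^2=kn/m$, we get $\sigma_{\max}(B)^2\geq n/m$ and hence $\sigma_{\min}(M)^2\leq k/m$. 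So $M^\top M-\lambda I$ is \emph{never} small in operator norm for $k\geq 1$, no matter how cleverly the submatrix is chosen, and the polar factor $Q=M(M^\top M)^{-1/2}$ differs from $M$ by $1-\sigma_{\min}(M)\approx 1$ in operator norm; controlling the \emph{max-norm} of that correction would require entrywise information about the singular vectors of $M$ that you have no handle on.

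The paper's resolution is a different orthogonalization entirely: writing $\frac{1}{\sqrt{m}}H=\left[\begin{smallmatrix} A & B \\ C & D\end{smallmatrix}\right]$ with $A$ the \emph{small} $k\times k$ block, the matrix $D+C(I-A)^{-1}B$ is exactly orthogonal (Lemma~\ref{lem.submatrix orthogonalization}, a Schur-complement/scattering-type identity). The inverse now involves only the $k\times k$ block, for which $\|A\|_{\operatorname{op}}\leq\|A\|_F=k/\sqrt{m}<1$ when $k<\sqrt{m}$, so a Neumann series gives the entrywise bound $\|D+C(I-A)^{-1}B\|_{\max}\leq\frac{1}{\sqrt{m}-k}$ immediately. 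This identity is the one genuinely new ingredient of the proof, and it is exactly the piece missing from your proposal; without it (or some substitute that avoids inverting a poorly conditioned $n\times n$ Gram matrix), the argument does not close.
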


As we will see, with known constructions of Hadamard matrices, we can take $\alpha=\frac{17}{92}-\delta$ for any small $\delta>0$, while conditioned on the Hadamard conjecture, we can take $\alpha=\frac{1}{4}-\delta$.

First, we explain the second arrow of the above schematic.
In particular, we identify the related problem of \textit{flat orthogonal matrices}, introduced by Jaming and Matolcsi in~\cite{JamingM:15}.
In particular, for each positive integer $n$, let $u(n)$ denote the smallest possible entry-wise infinity norm of an $n\times n$ orthogonal matrix:
\[
u(n)
:=\min_{M\in O(n)}\|M\|_{\max}.
\]
Observe that $u(n)\geq 1/\sqrt{n}$, with equality precisely when there exists a Hadamard matrix of order~$n$.
It turns out that $u(n)$ approaches this lower bound when $n$ gets large, thereby resolving Problem~1.2 of~\cite{JamingM:15} in the affirmative:

\begin{theorem}
\label{thm.flat orthogonal matrices}
There exists $\epsilon\in(0,\frac{1}{2})$ such that $\displaystyle u(n)\leq\frac{1}{\sqrt{n}-n^\epsilon}$ for all sufficiently large $n$.
\end{theorem}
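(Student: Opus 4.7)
The plan is to follow the first arrow of the schematic: extract an $n\times n$ flat orthogonal matrix from a Hadamard matrix of slightly larger order by polar-orthogonalizing an $n\times n$ block. First I would invoke a density result for Hadamard orders---which follows from combining the standard constructions (Paley, Williamson, Sylvester doubling, Craigen arrays, and products thereof)---yielding some exponent $\beta<1/2$ such that for every sufficiently large $n$ there is a Hadamard matrix $H$ of order $N\in[n,n+n^\beta]$. The specific value of $\beta$ is what controls $\epsilon$ in the end.

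Next, after a suitable permutation of rows and columns, I would write $H$ in block form
\[
H \;=\; \begin{pmatrix} A & B \\ C & D \end{pmatrix}, \qquad A\in\{\pm1\}^{n\times n},\quad D\in\{\pm1\}^{m\times m},\quad m:=N-n.
\]
The Hadamard identities $HH^T=NI_N$ and $H^TH=NI_N$ give $AA^T=NI_n-BB^T$ and $A^TB=-C^TD$. As long as $D$ is nonsingular---which one can arrange since every nonsingular matrix contains an invertible $m\times m$ submatrix---one has $BB^T\prec NI_n$, so $AA^T\succ 0$ and the orthogonal polar factor $M:=(AA^T)^{-1/2}A$ is well-defined. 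Equivalently, $M=N^{-1/2}(I-BB^T/N)^{-1/2}A$.

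To bound $\|M\|_{\max}$, I would expand $(I-X)^{-1/2}-I=\tfrac12 X+\tfrac38 X^2+\cdots$ with $X=BB^T/N$. The leading term $N^{-1/2}A$ has maximum entry $1/\sqrt N\le 1/\sqrt n$. The first-order correction is $(2N^{3/2})^{-1}BB^TA$, and using $B^TA=-D^TC$ one computes that its $(i,j)$-entry equals $-\langle DB_i,\,c_j\rangle/(2N^{3/2})$, where $B_i$ is the $i$-th row of $B$ and $c_j$ the $j$-th column of $C$. Cauchy-Schwarz with $\|B_i\|_2=\|c_j\|_2=\sqrt m$ and $\|D\|_{\mathrm{op}}\le\|D\|_F=m$ gives the entry-wise bound $m^2/(2N^{3/2})$. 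For higher-order terms, the identity $(BB^T)^kA=A(C^TC)^k$ (by induction from $BB^TA=AC^TC$) together with geometric summation shows that the full correction stays of order $m^2/N^{3/2}$. Hence $\|M\|_{\max}\le 1/\sqrt N+O(m^2/N^{3/2})\le 1/(\sqrt n-n^\epsilon)$ for any $\epsilon>2\beta-1/2$; choosing $\epsilon\in(\max(0,2\beta-1/2),\,1/2)$ delivers the theorem.

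The hard part is the tail control of the inverse-square-root expansion: if $\sigma_{\min}(D)$ is small, then $\|BB^T\|_{\mathrm{op}}$ is close to $N$ and the geometric series barely converges, threatening to blow up the higher-order correction. Overcoming this requires either choosing the block partition cleverly so that $\sigma_{\min}(D)\gtrsim\sqrt m$, or performing a more refined entry-wise analysis that exploits the partial-inner-product cancellations between rows of $H$ to bound quantities such as $\langle a^{(i)},v\rangle$ for $v$ in the range of $C^T$.
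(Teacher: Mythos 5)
Your route (polar-orthogonalize the \emph{large} $n\times n$ block $A$) is genuinely different from the paper's, which instead normalizes $H$ to $\frac{1}{\sqrt N}H=\left[\begin{smallmatrix}A&B\\C&D\end{smallmatrix}\right]$ with $A$ the \emph{small} $k\times k$ block ($k=N-n$) and takes the orthogonal matrix $D+C(I-A)^{-1}B$; there the only inversion is of $I-A$ with $\|A\|_{\mathrm{op}}\le\|A\|_F=k/\sqrt N<1$, so the Neumann series is controlled by the trivial bound $(1-k/\sqrt N)^{-1}$ and the entrywise estimate $\frac{1}{\sqrt N}+\frac{k}{N}\cdot\frac{1}{1-k/\sqrt N}=\frac{1}{\sqrt N-k}$ falls out with no spectral information about any large submatrix. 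Your algebraic identities are all correct ($BB^\top A=AC^\top C$, $B^\top A=-D^\top C$, the first-order entry $-\langle DB_i,c_j\rangle/(2N^{3/2})$ of size at most $m^2/(2N^{3/2})$), and the gap requirement $\epsilon>2\beta-\tfrac12$ would indeed suffice \emph{if} the tail were really $O(m^2/N^{3/2})$.

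But that tail bound is exactly where the argument has a genuine hole, as you yourself flag. Writing $(A(C^\top C)^k)_{ij}=(Ca^{(i)})^\top(CC^\top)^{k-1}c_j$ and using $Ca^{(i)}=-DB_i$, the $k$-th term is bounded by $c_k\,m^2\,\bigl(\|CC^\top\|_{\mathrm{op}}/N\bigr)^{k-1}/N^{3/2}$, where $c_k\sim k^{-1/2}$ are the (positive) coefficients of $(1-x)^{-1/2}$. Since $CC^\top=NI_m-DD^\top$, one has $\|CC^\top\|_{\mathrm{op}}=N-\sigma_{\min}(D)^2$, so the geometric summation produces a factor of order $\sqrt{N}/\sigma_{\min}(D)$, not $O(1)$; with only ``$D$ nonsingular'' this factor is unbounded (generically $\sigma_{\min}(D)$ can be arbitrarily small, and your existence argument for an invertible $D$ gives no quantitative lower bound). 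Your first proposed fix, arranging $\sigma_{\min}(D)\gtrsim\sqrt m$, is essentially asking $D$ itself to be an approximate Hadamard matrix --- circular, and false for a typical $m\times m$ $\pm1$ submatrix, whose least singular value is more like $1/\sqrt m$. The proposal is likely salvageable --- e.g.\ choosing $D$ by a maximal-volume selection (Goreinov--Tyrtyshnikov) gives $\sigma_{\min}(D)\gtrsim(mN)^{-1/2}\cdot\sqrt N=1/\sqrt m$, which turns the correction into $O(m^{5/2}/N)$ and still yields some $\epsilon<\tfrac12$ --- but that ingredient is missing, and without it the ``higher-order terms stay of order $m^2/N^{3/2}$'' claim is unjustified.
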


As we will see, with known constructions of Hadamard matrices, we can take $\epsilon=\frac{3}{23}+\delta$ for any small $\delta>0$, while conditioned on the Hadamard conjecture, we can take $\epsilon>0$ arbitrarily small.

Before proving this theorem, we first show that it implies Theorem~\ref{thm.main result}.
(As suggested by our schematic, the main idea is that randomly rounding a flat orthogonal matrix results in an approximate Hadamard matrix.)

\begin{proof}[Proof of Theorem~\ref{thm.main result}]
Select $M\in O(n)$ with $\|M\|_{\max}=u(n)$, and let $X$ denote the random matrix with independent entries in $\{\pm1\}$ such that $\mathbb{E}X=M/\|M\|_{\max}$.
Put $E:=X-\mathbb{E}X$.
We will use the matrix Bernstein inequality to show that 
\begin{equation}
\label{eq.matrix bernstein}
\mathbb{E}\|E\|_{\operatorname{op}}
\leq\sqrt{2(n-\tfrac{1}{u(n)^2})\log(2n)}+\tfrac{2}{3}\log(2n)
=:e(n).
\end{equation}
For now, we assume \eqref{eq.matrix bernstein} holds.
Fix a sample point $\omega$ with $\|E(\omega)\|_{\operatorname{op}}\leq\mathbb{E}\|E\|_{\operatorname{op}}$.
Then Weyl's inequality gives
\begin{align*}
\sigma_{\min}(X(\omega))
&\geq\sigma_{\min}(\mathbb{E}X)-\|E(\omega)\|_{\operatorname{op}}
\geq\frac{1}{u(n)}-e(n)
,\\
\sigma_{\max}(X(\omega))
&\leq\sigma_{\max}(\mathbb{E}X)+\|E(\omega)\|_{\operatorname{op}}
\leq\frac{1}{u(n)}+e(n),
\end{align*}
and so
\[
\frac{\sigma_{\max}(X(\omega))}{\sigma_{\min}(X(\omega))}
\leq\frac{1+u(n)e(n)}{1-u(n)e(n)}
=1+2u(n)e(n)+o\big(u(n)e(n)\big).
\]
Meanwhile, Theorem~\ref{thm.flat orthogonal matrices} implies
\[
u(n)e(n)
\leq\frac{\sqrt{4n^{1/2+\epsilon}
\log(2n)}+\tfrac{2}{3}\log(2n)}{\sqrt{n}-n^\epsilon}
=\big(1+o(1)\big)\cdot\frac{\sqrt{4\log n}}{n^{1/4-\epsilon/2}}.
\]
The result then follows by taking $\alpha<1/4-\epsilon/2$.

It remains to establish \eqref{eq.matrix bernstein}.
We shall follow Theorem~6.1.1 in Tropp's \textit{An Introduction to Matrix Concentration Inequalities}~\cite{Tropp:15}, which in turn requires us to identify a boundedness parameter $L$ and a variance parameter $v$ in our use case.
Let $E_{ij}$ denote the $(i,j)$ entry of the random matrix~$E$, let $e_i$ denote the $i$th standard basis element, and put $S_{ij}:=E_{ij}e_ie_j^\top$ so that $E=\sum_{i,j\in[n]}S_{ij}$.
Since $|E_{ij}|\leq 2$ almost surely, we have $\|S_{ij}\|_{\operatorname{op}}\leq 2=:L$ almost surely.
Next,
\[
\sum_{i,j\in[n]}\mathbb{E}[S_{ij}S_{ij}^\top]
=\sum_{i,j\in[n]}\operatorname{Var}(X_{ij})\cdot e_ie_i^\top
=\sum_{i,j\in[n]}\big(1-(\mathbb{E}X_{ij})^2\big)\cdot e_ie_i^\top
=(n-\tfrac{1}{u(n)^2}) \cdot I_n,
\]
and similarly, $\sum_{i,j\in[n]}\mathbb{E}[S_{ij}^\top S_{ij}]=(n-\tfrac{1}{u(n)^2}) \cdot I_n$.
It follows that
\[
v
:=\max\bigg\{\bigg\|\sum_{i,j\in[n]}\mathbb{E}[S_{ij}S_{ij}^\top]\bigg\|_{\operatorname{op}},\bigg\|\sum_{i,j\in[n]}\mathbb{E}[S_{ij}^\top S_{ij}]\bigg\|_{\operatorname{op}}\bigg\}
=n-\frac{1}{u(n)^2}.
\]
Thus, taking $d_1:=n$ and $d_2:=n$, Theorem~6.1.1 in~\cite{Tropp:15} gives
\begin{align*}
\mathbb{E}\|E\|_{\operatorname{op}}
\leq \sqrt{2v\log(d_1 + d_2)}+\tfrac{1}{3} L \log(d_1 +d_2)
=\sqrt{2(n-\tfrac{1}{u(n)^2})\log(2n)}+\tfrac{2}{3}\log(2n)
\end{align*}
i.e., \eqref{eq.matrix bernstein} holds.
\end{proof}

It remains to prove Theorem~\ref{thm.flat orthogonal matrices}.
Here's our approach:
Given a positive integer $n$, find a Hadamard matrix $H$ of order $m\geq n$ (with $m$ not much larger than $n$), and then ``orthogonalize'' an $n\times n$ submatrix of $H$.
The following lemma explains this latter step:

\begin{lemma}[submatrix orthogonalization]
\label{lem.submatrix orthogonalization}
Given an orthogonal matrix $\left[\begin{smallmatrix} A & B \\ C & D \end{smallmatrix}\right]$ with $I-A$ invertible, it holds that $D+C(I-A)^{-1}B$ is also orthogonal.
\end{lemma}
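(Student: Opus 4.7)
The plan is to use the geometric fact that orthogonal matrices are precisely the isometries, and to realize $N := D + C(I-A)^{-1}B$ as an input-output map that must preserve norms. Write the action of $M = \left[\begin{smallmatrix} A & B \\ C & D \end{smallmatrix}\right]$ on a pair $(a,b)$ as $(a',b')$ with $a' = Aa + Bb$ and $b' = Ca + Db$, and recall that orthogonality of $M$ is the statement $\|a'\|^2 + \|b'\|^2 = \|a\|^2 + \|b\|^2$ for every $(a,b)$.

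Given an arbitrary $b$, I would first produce a ``fixed point'' in the top block by choosing $a$ so that $a' = a$. The equation $(I-A)a = Bb$ has the unique solution $a = (I-A)^{-1}Bb$ by hypothesis. A short computation, using $A(I-A)^{-1} + I = (I-A)^{-1}$ (which follows from the fact that $A$ commutes with $I-A$), confirms $a' = Aa + Bb = a$. Plugging this $a$ into the second block then gives $b' = (D + C(I-A)^{-1}B)b = Nb$.

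Substituting $a' = a$ and $b' = Nb$ into the energy identity collapses it to $\|Nb\|^2 = \|b\|^2$, valid for every $b$. Since $N$ is square (of the same order as $D$), this isometry property is equivalent to $N^T N = I$, so $N$ is orthogonal.

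The main ``obstacle'' is really just noticing that the feedback condition $a' = a$ is what makes the orthogonality identity telescope; once spotted, everything else is a line or two. A purely algebraic alternative would be to expand $N^T N$ directly and invoke the four block relations from $M^T M = I$, together with the identity $I - A^T A = (I-A)^T + A^T(I-A)$, to cancel cross terms; that route works but is more computational and less illuminating.
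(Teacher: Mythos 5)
Your proof is correct and is essentially identical to the paper's: both arguments pick, for each input $b$, the unique fixed point $a=(I-A)^{-1}Bb$ of the top block so that the orthogonal matrix maps $(a,b)\mapsto(a,Nb)$, and then the norm identity telescopes to $\|Nb\|=\|b\|$. No gaps; the fixed-point observation you highlight is exactly the key step the paper uses.
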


\begin{proof}
We will show that $M:=D+C(I-A)^{-1}B$ satisfies $\|Mu\|=\|u\|$ for every $u$.
Fix $u$.
Since $I-A$ is invertible, there is a unique $x$ such that $x = Ax + Bu$, namely, $x = (I-A)^{-1}Bu$.
Then
\[
\left[\begin{array}{cc} A & B \\ C & D \end{array}\right]\left[\begin{array}{c} x \\ u \end{array}\right]
=\left[\begin{array}{c} Ax+Bu \\ Cx+Du \end{array}\right]
=\left[\begin{array}{c} x \\ C(I-A)^{-1}Bu+Du \end{array}\right]
=\left[\begin{array}{c} x \\ Mu \end{array}\right].
\]
Since $\left[\begin{smallmatrix} A & B \\ C & D \end{smallmatrix}\right]$ is orthogonal, the result follows by taking the norm of both sides.
\end{proof}

Conveniently, running submatrix orthogonalization on a Hadamard matrix results in a \textit{flat} orthogonal matrix, provided $A$ has relatively few rows and columns:

\begin{lemma}
\label{lem.submatrix is flat orthogonal}
Given an $m\times m$ Hadamard matrix $H$, select $k<\sqrt{m}$ and consider the block decomposition $\frac{1}{\sqrt{m}}H = \left[\begin{smallmatrix} A & B \\ C & D \end{smallmatrix}\right]$ with $A\in\mathbb{R}^{k\times k}$.
Then $I-A$ is invertible and $\|D+C(I-A)^{-1}B\|_{\max}\leq\frac{1}{\sqrt{m}-k}$.
\end{lemma}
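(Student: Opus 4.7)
The plan is to first establish that $\|A\|_{\operatorname{op}}<1$, so that $I-A$ is invertible, and then to bound the entries of $M:=D+C(I-A)^{-1}B$ via Cauchy--Schwarz. Both steps exploit the same single fact: every entry of the orthogonal matrix $\tfrac{1}{\sqrt{m}}H$ has magnitude exactly $1/\sqrt{m}$.

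For the first step, I would use the crude inequality $\|A\|_{\operatorname{op}}\le\|A\|_F$. Since $A$ is a $k\times k$ block with entries of magnitude $1/\sqrt{m}$, we get $\|A\|_F=k/\sqrt{m}$, which is strictly less than $1$ by the hypothesis $k<\sqrt{m}$. Hence the Neumann series $(I-A)^{-1}=\sum_{j\ge 0}A^j$ converges, and $\|(I-A)^{-1}\|_{\operatorname{op}}\le (1-k/\sqrt{m})^{-1}=\tfrac{\sqrt{m}}{\sqrt{m}-k}$.

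For the second step, I would write $M_{ij}=D_{ij}+c_i^{\!\top}(I-A)^{-1}b_j$, where $c_i^{\!\top}$ is the $i$-th row of $C$ and $b_j$ is the $j$-th column of $B$. The flatness of $H$ gives $|D_{ij}|=1/\sqrt{m}$ together with $\|c_i\|=\|b_j\|=\sqrt{k/m}$, so Cauchy--Schwarz combined with the operator-norm bound above yields
\[
|M_{ij}|\le\tfrac{1}{\sqrt{m}}+\sqrt{\tfrac{k}{m}}\cdot\tfrac{\sqrt{m}}{\sqrt{m}-k}\cdot\sqrt{\tfrac{k}{m}}=\tfrac{1}{\sqrt{m}}+\tfrac{k}{\sqrt{m}(\sqrt{m}-k)}=\tfrac{1}{\sqrt{m}-k},
\]
which is exactly the claimed bound.

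No step here is especially delicate: the hypothesis $k<\sqrt{m}$ is tailored precisely so that the Frobenius-norm estimate gives $\|A\|_{\operatorname{op}}<1$, and after that everything is routine. The only care needed is the telescoping of $\tfrac{1}{\sqrt{m}}+\tfrac{k}{\sqrt{m}(\sqrt{m}-k)}$ into the clean form $\tfrac{1}{\sqrt{m}-k}$, which makes the bound sharp enough to feed into Theorem~\ref{thm.flat orthogonal matrices} once a Hadamard order $m$ close to $n$ is supplied.
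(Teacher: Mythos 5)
Your proposal is correct and follows essentially the same route as the paper's proof: bound $\|A\|_{\operatorname{op}}$ by $\|A\|_F=k/\sqrt{m}<1$ to invoke the Neumann series, then apply Cauchy--Schwarz entrywise using $\|c_i\|=\|b_j\|=\sqrt{k/m}$ and the operator-norm bound $\frac{\sqrt{m}}{\sqrt{m}-k}$ on $(I-A)^{-1}$. The telescoping to $\frac{1}{\sqrt{m}-k}$ matches the paper's computation exactly.
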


\begin{proof}
First, $\|A\|_{\operatorname{op}}^2\leq\|A\|_F^2=k^2/m<1$.
As such, $I-A$ is invertible and the Neumann series
\[
(I-A)^{-1} 
= \sum_{n=0}^\infty A^n
\]
converges.
Put $M:=D+C(I-A)^{-1}B$.
The size of the $(i,j)$ entry of $M$ is
\[
| D_{ij}+e_i^\top C(I-A)^{-1}Be_j |
\leq | D_{ij} | + \|e_i^\top C\|_2 \|(I-A)^{-1}\|_{\operatorname{op}} \|Be_j\|_2
=\frac{1}{\sqrt{m}}+\frac{k}{m}\cdot\|(I-A)^{-1}\|_{\operatorname{op}}.
\]
Next, we apply the triangle inequality over the Neumann series:
\[
\|(I-A)^{-1}\|_{\operatorname{op}}
\leq\sum_{n=0}^\infty \|A\|_{\operatorname{op}}^n
\leq\sum_{n=0}^\infty \left(\frac{k}{\sqrt{m}}\right)^n
=\frac{1}{1-k/\sqrt{m}}.
\]
All together, we have
\[
\|M\|_{\max}
\leq\frac{1}{\sqrt{m}}+\frac{k}{m}\cdot\frac{1}{1-k/\sqrt{m}}
=\frac{1}{\sqrt{m}-k}.
\qedhere
\]
\end{proof}

Finally, the plethora of known Hadamard matrices allows us to select $m$ close to $n$:

\begin{lemma}
\label{lem.hadamard gaps}
Fix positive integers $a$, $b$, and $c$ with the property that for every positive integer~$s$, there exists a Hadamard matrix of order $2^t(2s+1)$, where $t=t(s)=a\lfloor\frac{1}{b}\log_2 s\rfloor + c$.
Then for every sufficiently large integer $n$, there exists a Hadamard matrix of order m such that
\[
n
\leq m
< n + 2\cdot 2^{\frac{b(a+c+1)}{a+b}}\cdot n^{\frac{a}{a+b}}.
\]
\end{lemma}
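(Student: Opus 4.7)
The key extra ingredient beyond the hypothesis is Sylvester's doubling: if $M$ is a Hadamard order then so is $2M$. Combined with the hypothesis, this means a Hadamard matrix of order $2^{t'}(2s+1)$ exists for every $s\ge 1$ and every integer $t'\ge t(s)$. Given $n$, the plan is to choose such a pair $(s,t')$ so that $m:=2^{t'}(2s+1)$ lands just above $n$ with $m-n$ small, and then to verify the two ingredients needed: first, that $m\ge n$ with a tight enough rounding bound; second, that the pair $(s,t')$ actually satisfies $t'\ge t(s)$.

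Concretely, I would set
\[
t':=\left\lceil\frac{a\log_2 n+bc}{a+b}\right\rceil,
\]
and take $s$ to be the smallest positive integer with $2s+1\ge n/2^{t'}$. Since the smallest odd integer exceeding a real $x$ is strictly less than $x+2$, this yields $n\le m<n+2\cdot 2^{t'}$; and for $n$ sufficiently large, $n/2^{t'}$ is large, so $s\ge 1$ is automatic.

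The main verification is that $t'\ge t(s)$, so that $m$ really is a Hadamard order. From $2s+1<n/2^{t'}+2$ one obtains, for $n$ large, $\log_2 s\le \log_2 n-t'$, and hence
\[
t(s)=a\lfloor(\log_2 s)/b\rfloor+c\le (a/b)(\log_2 n-t')+c.
\]
The inequality $t(s)\le t'$ rearranges to $(a+b)t'\ge a\log_2 n+bc$, which is exactly how $t'$ was chosen.

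Finally, $t'\le (a\log_2 n+bc)/(a+b)+1$ gives $2^{t'}\le 2\cdot 2^{bc/(a+b)}\,n^{a/(a+b)}$, and a short comparison of exponents (using $a\le ab$ for $b\ge 1$) shows $4\cdot 2^{bc/(a+b)}\le 2\cdot 2^{b(a+c+1)/(a+b)}$. Combined with $m-n<2\cdot 2^{t'}$, this yields the stated bound. The main obstacle is the calibration of $t'$: it has to be small enough that the $2\cdot 2^{t'}$ rounding gap fits inside the target $O(n^{a/(a+b)})$, but large enough that $t'\ge t(s)$ holds for the forced choice $s\approx n/2^{t'+1}$; the rest is bookkeeping with floors and ceilings and the Sylvester doubling trick.
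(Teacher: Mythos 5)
Your proof is correct and follows essentially the same strategy as the paper: use Sylvester doubling to turn the hypothesis into arithmetic progressions $\{2^{t}(2s+1)\}$ of Hadamard orders with common difference $2^{t+1}$, then calibrate $t\approx\frac{a}{a+b}\log_2 n$ so that the gap $2^{t+1}$ is $O(n^{a/(a+b)})$ while $t\ge t(s)$ still holds for the forced $s$. The only difference is organizational — you choose $t'$ by an explicit ceiling formula and verify admissibility, whereas the paper takes the minimal admissible $t$ and extracts the bound on $2^t$ from that minimality — and your constant-tracking ($4\cdot 2^{bc/(a+b)}\le 2\cdot 2^{b(a+c+1)/(a+b)}$ via $b\ge1$) checks out.
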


Notably, the hypothesis of Lemma~\ref{lem.hadamard gaps} was proven by Craigen~\cite{Craigen:95} in 1995 with $a=4$, $b=6$, and $c=6$, resulting in an exponent of $\frac{a}{a+b}=\frac{2}{5}<\frac{1}{2}$.
More recently, Du and Jiang~\cite{DuJ:24} obtained $a=6$, $b=40$, and $c=10$, resulting in a smaller exponent of $\frac{a}{a+b}=\frac{3}{23}$.
(See~\cite{DuJ:24} for a discussion of the history of such results.)

\begin{proof}[Proof of Lemma~\ref{lem.hadamard gaps}]
By iteratively taking Kronecker products with a Hadamard matrix of order $2$, we have Hadamard matrices of order $2^t(2s+1)$ for every $t\geq t(s)$.
Notably, $t(s)$ is monotonically increasing in $s$.
Given an integer $t\geq c$, let $s(t)$ denote the largest integer $s$ for which $t\geq t(s)$.
Then we have Hadamard matrices of order $2^t(2s+1)$ for every $s\leq s(t)$.
This gives an arithmetic progression of Hadamard matrix orders with common difference $2^{t+1}$.
Given a positive integer $n$, find the smallest $t\geq c$ for which $n\leq 2^t(2s(t)+1)$.
Then the smallest $m\geq n$ in this arithmetic progression satisfies
\[
n\leq m < n + 2^{t+1}.
\]
It remains to bound $2^{t+1}$ in terms of $a$, $b$, and $c$.

First, we observe that $s(t)\geq 2^{b\lfloor \frac{t-c}{a}\rfloor}$ for $t\geq c$.
Indeed, this follows from the easily verified fact that $t(2^{b\lfloor \frac{t-c}{a}\rfloor})\leq t$.
Next, since $t$ was selected to be minimal, it either holds that $t=c$ or 
\[
n
>2^{t-1}(2s(t-1)+1)
\geq 2^{t}\cdot 2^{b\lfloor \frac{t-1-c}{a}\rfloor}
\geq 2^t\cdot 2^{b(\frac{t-1-c}{a}-1)}
=2^{\frac{t(a+b)}{a}}\cdot2^{-\frac{b(a+c+1)}{a}},
\]
which rearranges to $2^t<2^{\frac{b(a+c+1)}{a+b}}\cdot n^{\frac{a}{a+b}}$.
Since $n$ is sufficiently large, this latter bound is greater than $2^c$, and the result follows.
\end{proof}

\begin{proof}[Proof of Theorem~\ref{thm.flat orthogonal matrices}]
Put $a=6$, $b=40$, and $c=10$, and fix $\epsilon\in(\frac{a}{a+b},\frac{1}{2})$.
Given a sufficiently large integer $n$, then the main result in~\cite{DuJ:24} combined with Lemma~\ref{lem.hadamard gaps} gives a Hadamard matrix $H$ of order $m\in[n,n+n^\epsilon)$.
Put $k:=m-n$, and consider the block decomposition $\frac{1}{\sqrt{m}}H = \left[\begin{smallmatrix} A & B \\ C & D \end{smallmatrix}\right]$ with $A\in\mathbb{R}^{k\times k}$.
By Lemma~\ref{lem.submatrix is flat orthogonal}, $I-A$ is invertible, and $M:=D+C(I-A)^{-1}B$ satisfies $\|M\|_{\max}\leq\frac{1}{\sqrt{m}-k}$.
Furthermore, $M$ is orthogonal by Lemma~\ref{lem.submatrix orthogonalization}, and so
\[
u(n)
\leq \|M\|_{\max}
\leq\frac{1}{\sqrt{m}-k}
\leq\frac{1}{\sqrt{n}-n^\epsilon}.
\qedhere
\]
\end{proof}

\section{Lower bound}
\label{sec.lower bound}

In this section, we use ideas from Ramsey theory to obtain a lower bound on $\kappa(n)$:

\begin{theorem}
\label{thm.kappa lower bound}
There exists $c>0$ such that $\displaystyle\kappa(n)\geq 1+\frac{c\log n}{n}$ for all sufficiently large $n\not\equiv 0\bmod 4$.
\end{theorem}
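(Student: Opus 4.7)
My plan is to analyze the Gram matrix $M:=AA^\top$ of an arbitrary $A\in\{\pm1\}^{n\times n}$ and write $M=nI+R$, so that $R$ has zero diagonal and off-diagonal entries that are integers with the same parity as $n$. The pivotal observation is that for $n\not\equiv 0\bmod 4$, no three rows $r_1,r_2,r_3$ of $A$ can be pairwise orthogonal: expanding $\|r_1+r_2+r_3\|^2$ first via pairwise inner products as $3n$, and then coordinatewise as $9p+(n-p)$ where $p$ counts coordinates at which all three rows agree, forces $p=n/4$, which is not an integer. Consequently the ``zero graph'' $G_0:=\{\{i,j\}:R_{ij}=0\}$ is triangle-free.

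Next I would use Ramsey theory to find a subset $S\subseteq[n]$ of size $k=\Omega(\log n)$ on which all off-diagonal entries of $R$ share the same sign. If $n$ is odd, every $R_{ij}$ (with $i\neq j$) is a nonzero integer, so the sign function $2$-colors $E(K_n)$ and the bound $R(k,k)\leq 4^k$ yields such an $S$ directly. If $n\equiv 2\bmod 4$, the triangle-free graph $G_0$ has an independent set $S'$ of size $\Omega(\sqrt{n})$ (using $R(3,t)=O(t^2)$); within $S'$ every $R_{ij}$ is a nonzero even integer, and a second application of two-color Ramsey inside $S'$ produces $S\subseteq S'$ of size $\Omega(\log|S'|)=\Omega(\log n)$.

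To convert this clique into an eigenvalue gap, set $x:=k^{-1/2}\mathbf{1}_S$ and note that $x^\top R x = k^{-1}\sum_{i\neq j\in S}R_{ij}$, which is $\geq k-1$ in the positive-sign case (so $\lambda_{\max}(R)\geq k-1$) and $\leq-(k-1)$ in the negative-sign case (so $\lambda_{\min}(R)\leq -(k-1)$). Since $\operatorname{tr}(R)=0$ and $R\neq 0$, the matrix $R$ has eigenvalues of both signs, so $\lambda_{\min}(R)\leq 0\leq\lambda_{\max}(R)$. Translating back to $M=nI+R$, either $\lambda_{\max}(M)\geq n+k-1$ with $\lambda_{\min}(M)\leq n$, or $\lambda_{\min}(M)\leq n-(k-1)$ with $\lambda_{\max}(M)\geq n$. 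In either case,
\[
\kappa(A)^2 \;=\; \frac{\lambda_{\max}(M)}{\lambda_{\min}(M)} \;\geq\; 1+\frac{k-1}{n},
\]
and a square root (with $k/n=o(1)$) yields $\kappa(A)\geq 1+c\log n/n$ for some universal $c>0$; since $A$ was arbitrary this lower-bounds $\kappa(n)$.

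The main obstacle is the $n\equiv 2\bmod 4$ case, where the entries $R_{ij}$ can legitimately vanish and a naive two-color Ramsey argument on $K_n$ is unavailable. The elementary coordinate count ruling out three pairwise orthogonal sign vectors saves the day by keeping $G_0$ triangle-free, which makes the two-stage Ramsey extraction above work. Everything else --- the integer parity of the entries of $R$, the trace-zero constraint, and the quadratic-form bound on extreme eigenvalues --- is routine linear algebra.
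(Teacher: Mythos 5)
Your proposal is correct and follows essentially the same route as the paper: the mod-$4$ obstruction to three pairwise orthogonal $\pm1$ vectors rules out ``zero'' triangles in the sign pattern of the Gram matrix, Ramsey theory then extracts an $\Omega(\log n)$ clique on which all off-diagonal entries share a sign, and that clique forces a relative spectral gap of order $k/n$. The only (harmless) differences are that you replace the paper's single appeal to the multicolor Ramsey number $R(3,k,k)$ with a two-stage extraction (independent set in the triangle-free zero graph, then two-color Ramsey), and you bound the extreme eigenvalues directly via a Rayleigh quotient on $\mathbf{1}_S$ together with $\operatorname{tr}(R)=0$, rather than via the paper's exact extremal lemmas for the principal submatrices $(n\mp1)I\pm J$.
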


Our approach is to identify a poorly conditioned principal submatrix of $A^\top A$ for any given $A\in\{\pm1\}^{n\times n}$.
The following lemma indicates the sort of submatrix we target:

\begin{lemma}
\label{lem.off diag pos}
Fix positive integers $k$ and $n\geq k$, and suppose $M\in\mathbb{R}^{k\times k}$ is positive semidefinite with $n$'s on the diagonal and positive integers on the off-diagonal.
Then $\kappa(M)\geq 1+\frac{k}{n-1}$, with equality precisely when $M=(n-1)I_k+J_k$.
\end{lemma}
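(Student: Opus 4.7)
The plan is a short Rayleigh quotient / trace estimate. Let $\lambda_1 \geq \lambda_2 \geq \cdots \geq \lambda_k \geq 0$ denote the eigenvalues of $M$; nonnegativity uses PSD-ness, and $\sum_i \lambda_i = \operatorname{tr}(M) = kn$ uses the diagonal hypothesis. The strategy is to lower bound $\lambda_1$ and upper bound $\lambda_k$ separately, then combine to lower bound $\kappa(M) = \lambda_1/\lambda_k$.

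For $\lambda_1$, I would test the Rayleigh quotient at the all-ones vector $\mathbf{1}$: the diagonal contribution is exactly $kn$ and each of the $k(k-1)$ off-diagonal contributions is at least $1$ (since positive integers are $\geq 1$), so $\lambda_1 \geq \mathbf{1}^\top M \mathbf{1}/k \geq n + k - 1$. For $\lambda_k$, the elementary observation that the smallest of $k$ numbers is at most the average of the other $k-1$ gives $\lambda_k \leq (kn - \lambda_1)/(k-1)$ (the edge case $\lambda_k = 0$ makes $\kappa = \infty$ and is trivial). Combining yields $\kappa(M) \geq \lambda_1(k-1)/(kn - \lambda_1)$, and since this expression is monotonically increasing in $\lambda_1$ on $[0, kn)$, substituting the lower bound $\lambda_1 \geq n + k - 1$ and using the identity $kn - (n+k-1) = (k-1)(n-1)$ collapses everything to $1 + k/(n-1)$.

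For the equality clause, $\lambda_1 = n + k - 1$ requires equality in both inequalities in the chain above: the entry-wise estimate forces every off-diagonal entry to equal exactly $1$, which already pins $M = (n-1)I_k + J_k$; the Rayleigh-quotient equality ($\mathbf{1}$ being a top eigenvector) and the averaging equality ($\lambda_2 = \cdots = \lambda_k$) are then automatic from the spectrum $\{n+k-1,\, n-1,\, \ldots,\, n-1\}$ of this matrix. I do not anticipate any real obstacle: everything is elementary linear algebra, and the only nontrivial input is the off-diagonal lower bound of $1$ supplied by the positive-integer hypothesis.
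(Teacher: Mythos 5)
Your proof is correct and takes essentially the same route as the paper's: both lower-bound $\lambda_{\max}$ by the Rayleigh quotient at the all-ones vector (using that the off-diagonal entries are at least $1$) and upper-bound $\lambda_{\min}$ by the average of the remaining eigenvalues via the trace, the paper merely packaging this as a strict comparison against $M_0=(n-1)I_k+J_k$ rather than your explicit quantitative chain. The equality analysis also matches, so there is nothing to add.
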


\begin{proof}
In the case where $M=M_0:=(n-1)I_k+J_k$, it holds that $n-1$ is an eigenvalue with multiplicity $k-1$, and $n-1+k$ is an eigenvalue with multiplicity $1$.
As such, $\kappa(M)=1+\frac{k}{n-1}$.
Now suppose $M\neq M_0$.
Then $k>1$ and
\[
\lambda_{\max}(M)
\geq\frac{\mathbf{1}_k^\top M\mathbf{1}_k}{\mathbf{1}_k^\top\mathbf{1}_k}
> \frac{kn+k(k-1)}{k}
=\lambda_{\max}(M_0),
\]
and so
\[
\lambda_{\min}(M)
\leq\frac{1}{k-1}\Big(\operatorname{tr}(M)-\lambda_{\max}(M)\Big)
<\frac{1}{k-1}\Big(\operatorname{tr}(M_0)-\lambda_{\max}(M_0)\Big)
=\lambda_{\min}(M_0).
\]
Thus, $\kappa(M)>\kappa(M_0)$.
\end{proof}

A similar proof gives the following:

\begin{lemma}
\label{lem.off diag neg}
Fix positive integers $k$ and $n\geq k$, and suppose $M\in\mathbb{R}^{k\times k}$ is positive semidefinite with $n$'s on the diagonal and negative integers on the off-diagonal.
Then $\kappa(M)\geq 1+\frac{k}{n+1-k}$, with equality precisely when $M=(n+1)I_k-J_k$.
\end{lemma}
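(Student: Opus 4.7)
The plan is to mirror the proof of Lemma~\ref{lem.off diag pos} almost verbatim, with the extremal matrix now $M_0 := (n+1)I_k - J_k$. I would first compute the spectrum of $M_0$: since $J_k$ has eigenvalue $k$ on $\mathbf{1}_k$ and eigenvalue $0$ on the orthogonal complement of $\mathbf{1}_k$, the matrix $M_0$ has eigenvalue $n+1-k$ of multiplicity $1$ (eigenvector $\mathbf{1}_k$) and eigenvalue $n+1$ of multiplicity $k-1$. Under the hypothesis $n\geq k$, both eigenvalues are positive, so $M_0$ is positive semidefinite with diagonal entries $n$ and off-diagonal entries $-1$, and
\[
\kappa(M_0) = \frac{n+1}{n+1-k} = 1 + \frac{k}{n+1-k},
\]
which handles the equality case.

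For the strict inequality when $M\neq M_0$, the key structural fact is that each off-diagonal entry of $M$ is a negative integer, hence at most $-1$, and the assumption $M\neq M_0$ forces at least one off-diagonal entry to be strictly less than $-1$. Consequently $\sum_{i\neq j} M_{ij} < -k(k-1)$. The roles of $\lambda_{\max}$ and $\lambda_{\min}$ are swapped relative to Lemma~\ref{lem.off diag pos}: here $\mathbf{1}_k$ is the \emph{bottom} eigenvector of $M_0$, so plugging it into the Rayleigh quotient bounds $\lambda_{\min}(M)$ from \emph{above}:
\[
\lambda_{\min}(M) \;\leq\; \frac{\mathbf{1}_k^\top M \mathbf{1}_k}{k} \;=\; n + \frac{\sum_{i\neq j} M_{ij}}{k} \;<\; n - (k-1) \;=\; \lambda_{\min}(M_0).
\]

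To finish, I would combine $\operatorname{tr}(M) = kn = \operatorname{tr}(M_0)$ with the elementary inequality $\operatorname{tr}(M) - \lambda_{\min}(M) \leq (k-1)\lambda_{\max}(M)$ (the remaining $k-1$ eigenvalues are each at most $\lambda_{\max}(M)$) to obtain
\[
\lambda_{\max}(M) \;\geq\; \frac{\operatorname{tr}(M)-\lambda_{\min}(M)}{k-1} \;>\; \frac{\operatorname{tr}(M_0)-\lambda_{\min}(M_0)}{k-1} \;=\; \lambda_{\max}(M_0),
\]
so that $\kappa(M) > \kappa(M_0)$. I do not anticipate any real obstacle, since the argument is structurally identical to that of Lemma~\ref{lem.off diag pos}; only the sign of the off-diagonal contribution to the Rayleigh quotient and the identity of the extremal matrix change. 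The one minor bookkeeping point is that the lower bound on $\lambda_{\max}(M)$ needs $k\geq 2$, but the $k=1$ case is trivial.
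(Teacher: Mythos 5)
Your proof is correct and is exactly the ``similar proof'' the paper intends (the paper gives no separate argument for this lemma, just mirroring Lemma~\ref{lem.off diag pos} with the roles of $\lambda_{\max}$ and $\lambda_{\min}$ swapped, as you do). One nitpick: the $k=1$ case is not ``trivial'' but degenerate --- there $M=M_0=[n]$ has $\kappa=1\neq 1+\tfrac{1}{n}$, so the stated formula fails --- but this edge case is inherited from the paper's own statements (Lemma~\ref{lem.off diag pos} has the same issue) and is irrelevant to the application, where $k\geq 3$.
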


Finally, it will be useful to recall the following classic result, which in turn implies the famous necessary condition on Hadamard matrices:

\begin{proposition}
\label{prop.hadamard necessary condition}
There exist three orthogonal vectors in $\{\pm1\}^n$ if and only if $n$ is a multiple of $4$.
\end{proposition}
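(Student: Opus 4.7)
The plan is to handle the two implications separately, with the forward direction by an explicit construction and the reverse direction by a short counting argument.

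For the ``if'' direction, suppose $n=4m$. I would exhibit three concrete pairwise orthogonal vectors in $\{\pm1\}^n$ by concatenating $m$ copies of a length-$4$ template, namely the vectors
\[
u=(\underbrace{1,1,1,1}_{\text{repeat }m\text{ times}}),\quad v=(\underbrace{1,1,-1,-1}_{\text{repeat }m\text{ times}}),\quad w=(\underbrace{1,-1,1,-1}_{\text{repeat }m\text{ times}}).
\]
A direct computation of the three inner products on a single length-$4$ block shows each vanishes, and orthogonality persists on the full length-$n$ vector by linearity over the blocks.

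For the ``only if'' direction, suppose $u,v,w\in\{\pm1\}^n$ are pairwise orthogonal. My first step is a normalization: negating the $i$th coordinate of all three vectors preserves all three pairwise inner products, so I may assume $u=(1,1,\dots,1)$. Under this normalization, I classify each index $i\in[n]$ by the sign pattern $(v_i,w_i)\in\{\pm1\}^2$, letting $a,b,c,d$ denote the number of indices with patterns $(+,+),(+,-),(-,+),(-,-)$ respectively.

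The last step is to read off four linear equations: counting indices gives $a+b+c+d=n$; the conditions $u\cdot v=0$, $u\cdot w=0$, and $v\cdot w=0$ translate respectively into $a+b=c+d$, $a+c=b+d$, and $a+d=b+c$. Combining the first two equations yields $a+b=n/2$, and comparing the remaining equations yields $b=c=d$ and then $a=b=c=d=n/4$. Since these must be nonnegative integers, $n$ is divisible by $4$. The only real obstacle is ensuring the bookkeeping of the four linear equations is correct, but this is a routine $4\times 4$ system.
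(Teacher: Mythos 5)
Your proof is correct: the block construction handles the ``if'' direction, and the sign-normalization plus the $4\times 4$ counting system (whose alternating-sign sums give $a=b=c=d=n/4$) handles the ``only if'' direction. The paper states this proposition as a recalled classic result and supplies no proof of its own, so there is nothing to compare against; your argument is the standard one and fills that gap correctly.
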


\begin{proof}[Proof of Theorem~\ref{thm.kappa lower bound}]
In the case where $n$ is odd, let $k$ denote the largest positive integer such that $n\geq R(k,k)$, where $R(k,k)$ denotes the $k$th diagonal Ramsey number.
Then the signs of the entries of $A^\top A$ determine a $2$-coloring of the edges of $K_n$.
(Here, the colors are ``positive'' and ``negative''.)
Since there is necessarily a monochromatic clique of size~$k$, this determines a principal $k\times k$ submatrix $M$ of $A^\top A$ such that all off-diagonal entries have the same sign.
Thus, Lemmas~\ref{lem.off diag pos} and~\ref{lem.off diag neg} together give
\begin{equation}
\label{eq.lower bound on kappa}
\kappa(A)
=\sqrt{\kappa(A^\top A)}
\geq\sqrt{\kappa(M)}
\geq\sqrt{1+\frac{k}{n-1}}.
\end{equation}

In the case where $n$ is even, let $k$ denote the largest positive integer such that $n\geq R(3,k,k)$, where $R(3,k,k)$ denotes the appropriate multicolor Ramsey number.
Then the signs of the entries of $A^\top A$ determine a $3$-coloring of the edges of $K_n$.
(Here, the colors are ``zero'', ``positive'', and ``negative''.)
There is either a triangle of color ``zero'' or a size-$k$ clique of color ``positive'' or ``negative''.
However, since $n$ is not a multiple of $4$, Proposition~\ref{prop.hadamard necessary condition} rules out the first possibility.
As such, there is a principal $k\times k$ submatrix $M$ of $A^\top A$ such that all off-diagonal entries have the same sign, and so the bound \eqref{eq.lower bound on kappa} holds.

In both cases, known bounds on Ramsey numbers give $k\geq\Omega(\log n)$.
\end{proof}

\section{Explicit approximate Hadamard matrices}
\label{sec.explicit families}

We say $C\in\mathbb{R}^{n\times n}$ is a \textit{conference matrix} if $C$ has $0$'s on the diagonal, $\pm1$'s on the off-diagonal, and satisfies $C^\top C=(n-1)I$.
Delsarte, Goethals, and Seidel~\cite{DelsarteGS:71} showed that, for every conference matrix, one may apply signed permutations to the rows and columns to obtain a conference matrix $C$ that is symmetric (when $n\equiv 2\bmod 4$) or skew-symmetric (when $n\equiv 0\bmod4$).
In the latter case, it holds that $C+I$ is a Hadamard matrix; in fact, all \textit{skew Hadamard} matrices arise in this way, and they are conjectured to exist for every $n\equiv0\bmod4$~\cite{Wallis:71}.
When $n\equiv2\bmod4$, it is natural to consider $\kappa(C+I)$.

\begin{lemma}
Given a symmetric conference matrix $C\in\mathbb{R}^{n\times n}$, it holds that $\kappa(C+I)=\frac{\sqrt{n-1}+1}{\sqrt{n-1}-1}$.
\end{lemma}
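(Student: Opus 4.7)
The plan is to diagonalize $C$ and then read off the singular values of $C+I$ directly from its eigenvalues. Since $C$ is symmetric, the defining identity $C^\top C=(n-1)I$ becomes $C^2=(n-1)I$, so the minimal polynomial of $C$ divides $x^2-(n-1)$. Every eigenvalue of $C$ therefore lies in $\{+\sqrt{n-1},\,-\sqrt{n-1}\}$, and $C$ is orthogonally diagonalizable with these two eigenvalues.

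Next I would pin down the multiplicities to confirm that both signs are actually realized. Since $C$ has zero diagonal, $\operatorname{tr}(C)=0$, which forces the two eigenvalues to have equal multiplicity $n/2$. This step ensures that both numbers $1\pm\sqrt{n-1}$ appear as eigenvalues of $C+I$; otherwise one could imagine $\sigma_{\min}(C+I)$ collapsing to the wrong value.

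Finally, because $C+I$ is symmetric, its singular values are the absolute values of its eigenvalues, namely $\sqrt{n-1}+1$ and $|\,1-\sqrt{n-1}\,|=\sqrt{n-1}-1$ (using $n\geq 3$, so that $\sqrt{n-1}>1$). Taking the ratio gives
\[
\kappa(C+I)=\frac{\sqrt{n-1}+1}{\sqrt{n-1}-1},
\]
as claimed. There is no real obstacle here; the only delicate point is making sure both eigenvalues of $C$ actually appear (so that $\sigma_{\min}(C+I)=\sqrt{n-1}-1$ rather than $\sqrt{n-1}+1$), and the trace condition dispatches that instantly.
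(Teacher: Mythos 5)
Your proof is correct and follows the paper's argument exactly: use $C^2=C^\top C=(n-1)I$ to locate the eigenvalues, the zero trace to force equal multiplicities $n/2$, and the symmetry of $C+I$ to read the singular values off as absolute values of eigenvalues. No differences worth noting.
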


\begin{proof}
Since $C^2=C^\top C=(n-1)I$, the eigenvalues of $C$ reside in $\{\pm\sqrt{n-1}\}$.
Next, $\operatorname{tr}(C)=0$ implies that both numbers are eigenvalues with multiplicity $n/2$.
Then the eigenvalues of $C+I$ are $\sqrt{n-1}+1$ and $-\sqrt{n-1}+1$, and so the singular values are the absolute values of these: $\sqrt{n-1}+1$ and $\sqrt{n-1}-1$.
The quotient is the desired condition number.
\end{proof}

Notably, this implies $\kappa(n)=1+O(\frac{1}{\sqrt{n}})$ whenever there exists a symmetric conference matrix of order $n$.
This occurs if $n-1\equiv 1\bmod 4$ is a prime power, and only if $n-1$ is a sum of two perfect squares~\cite{Belevitch:50,BaloninS:online}.

Next, we take inspiration from yet another relaxation of the Hadamard matrix problem, namely, Hadamard's maximal determinant problem; see~\cite{BrowneEHC:21} for a recent survey.
One family of matrices $A\in\{\pm1\}^{n\times n}$ that emerges in this problem satisfies $A^\top A=(n-1)I+J$.
These are known as \textit{Barba matrices}.
Then $\lambda_{\max}(A^\top A)=2n-1$ and $\lambda_{\min}(A^\top A)=n-1$, and so 
\[
\kappa(A)
=\sqrt{\kappa(A^\top A)}
=\sqrt{\frac{2n-1}{n-1}}=\sqrt{2}+o(1).
\]
Such matrices exist, for example, whenever $n=2(q^2+q)+1$ for some odd prime power $q$; see Theorem~A in~\cite{NeubauerR:97}.
Notably, this implies $n\equiv 1\bmod 4$.

Another important family arises from \textit{supplementary difference sets (SDSs)}.
Specifically, when $n\equiv 2\bmod 4$, suppose there exist circulant matrices $R,S\in\mathbb{R}^{n/2\times n/2}$ that satisfy the autocorrelation identity $R^\top R+S^\top S=(n-2)I+2J$.
Then the matrix
\[
A=\left[\begin{array}{ll} R & \phantom{-}S \\ S^\top & -R^\top\!\!\end{array}\right]
\]
satisfies $A^\top A=I_2\otimes((n-2)I_{n/2}+2J_{n/2})$.
As such, $\lambda_{\max}(A^\top A)=2n-2$ and $\lambda_{\min}(A^\top A)=n-2$, and so
\[
\kappa(A)
=\sqrt{\kappa(A^\top A)}
=\sqrt{\frac{2n-2}{n-2}}=\sqrt{2}+o(1).
\]
Furthermore, such matrices exist whenever $n/2=q^2+q+1$ for some odd prime power $q$; see Theorem~1 in~\cite{Spence:75}.

We conclude this section by discussing Table~\ref{table}, which reports putative values of $\kappa(n)$ for $n\leq 30$ (in cases other than $1$, $2$, and multiples of $4$).
In addition to the first $10$ digits of the condition number, we supply its minimal polynomial.
By exhaustive search, the reported condition numbers are known to equal $\kappa(n)$ for $n\leq 6$.
Interestingly, for smaller values of $n$, matrices that appear to minimize condition number also maximize the determinant (in absolute value).
Also, when $n$ is even, we found a lot of success with circulant $n/2\times n/2$ blocks, much like the SDS construction.
(Each matrix appears as an ancillary file of the arXiv version of this paper.)

\begin{table}
\centering
\caption{Putatively minimal condition numbers}
\label{table}
\bigskip

\setlength{\aboverulesep}{0pt}
\setlength{\belowrulesep}{0pt}

\setlength{\extrarowheight}{3pt}

\begin{tabular}{r c l l}
\toprule
\rowcolor{toprow}
$n$ & condition number & minimal polynomial & comments \\[3pt]
\midrule

3  & 2.000000000 & $-2+t$ & 
circulant, symmetric, \\[-3pt]
   &             &         & max determinant \\[3pt]
\midrule

5  & 1.500000000 & $-3+2t$ & Barba, circulant, symmetric, \\[-3pt]
   &             &         & max determinant \\[3pt]
\midrule

\rowcolor{evenrow}
6  & 1.581138830 & $-5+2t^{2}$ & SDS, circulant $3\times 3$ blocks, \\
\rowcolor{evenrow}
   &             &              & symmetric, max determinant \\[3pt]
\midrule

7  & 1.732050808 & $-3+t^{2}$ & symmetric, max determinant \\[3pt]
\midrule

9  & 1.850781059 & $-5 - t + 2t^{2}$ & symmetric \\[3pt]
\midrule

\rowcolor{evenrow}
10 & 1.500000000 & $-3+2t$ & SDS, circulant $5\times5$ blocks, \\[-3pt]
\rowcolor{evenrow}
   &             &         & max determinant \\[3pt]
\midrule

11 & 1.767766953 & $-25+8t^{2}$ & symmetric, max determinant \\[3pt]
\midrule

13 & 1.443375673 & $-25+12t^{2}$ & Barba, symmetric, \\[-3pt]
   &             &               & max determinant \\[3pt]
\midrule

\rowcolor{evenrow}
14 & 1.471960144 & $-13+6t^{2}$ & SDS, circulant $7\times7$ blocks, \\[-3pt]
\rowcolor{evenrow}
   &             &              & max determinant \\[3pt]
\midrule

15 & 1.527525232 & $-7+3t^{2}$ & symmetric, max determinant \\[3pt]
\midrule

17 & 1.700930833 &
\footnotesize{$256 - 1152 t^{2} + 1661 t^{4} - 936 t^{6} + 169 t^{8}$}
& symmetric \\[3pt]
\midrule

\rowcolor{evenrow}
18 & 1.457737974 & $-17+8t^{2}$ & SDS, circulant $9\times9$ blocks, \\[-3pt]
\rowcolor{evenrow}
   &             &             & max determinant \\[3pt]
\midrule

19 & 1.662877383 &
$-77 + 288 t^{2} - 307 t^{4} + 77 t^{6}$ & circulant \\[3pt]
\midrule

21 & 1.732050808 & $-3+t^{2}$ & circulant core \\[3pt]
\midrule

\rowcolor{evenrow}
22 & 1.511424872 &
\footnotesize{$\begin{aligned}
-3719 + 21191 t^{2} - 45561 t^{4} + 45825 t^{6}\\[-3pt]
 -21466 t^{8} + 3719 t^{10}
\end{aligned}$}
& circulant $11\times11$ blocks \\[3pt]
\midrule

23 & 1.702109681 &
\footnotesize{$\begin{aligned}
-6029 + 40001 t^{2} - 93367 t^{4} + 93950 t^{6}\\[-3pt]
 -40331 t^{8} + 6029 t^{10}
\end{aligned}$}
& circulant core \\[3pt]
\midrule

25 & 1.428869017 & $-49 + 24t^{2}$ & Barba, max determinant \\[3pt]
\midrule

\rowcolor{evenrow}
26 & 1.329508134 & $3 - 7t^{2} + 3t^{4}$ & circulant $13\times13$ blocks \\[3pt]
\midrule

27 & 1.603484352 &
$-271 + 1029 t^{2} - 1056 t^{4} + 271 t^{6}$
& block circulant with \\[-3pt]
& & & circulant $9\times9$ blocks \\[3pt]
\midrule

29 & 1.666939342 &
\footnotesize{$\begin{aligned}
354061 - 1045624 t^{2} + 1266560 t^{4} - 806784 t^{6}\\[-3pt]
 + 285440 t^{8} - 53248 t^{10} + 4096 t^{12}
\end{aligned}$}
& circulant core \\[3pt]
\midrule

\rowcolor{evenrow}
30 & 1.379101101 & $59 - 109t^{2} + 41t^{4}$ & circulant $15\times15$ blocks \\[3pt]
\bottomrule
\end{tabular}
\end{table}

\section{Comments on AI}
\label{sec.ai}

\textit{N.B.: The content of this section is most naturally presented as a first-person account, so it is written from the perspective of the third author.}

\medskip

\noindent
For the first month, this project did not involve any AI. 
We had two main ideas:
\begin{itemize}
\item[1.]
Inspired by the known circulant approximate Hadamard matrices obtained by randomly perturbing the Legendre symbol, we figured out a non-circulant analogue:
Given a Hadamard matrix of order $n+1$, normalize the first row and column before discarding them, and then randomly flip a few $-1$'s to $+1$'s to get condition number $1+c\cdot n^{-1/4}\sqrt{\log n}$.
\item[2.]
Inspired by how John and I have studied projective codes~\cite{JasperKM:19}, we ran a bunch of computer experiments to minimize the condition number for $n \leq 30$, with the intent of hunting for emergent combinatorial structure that would then lead to infinite families.
\end{itemize}

We wanted to extend our first idea in a way that starts with an approximate Hadamard matrix, thereby giving an inductive construction of approximate Hadamard matrices of order $m-k$ whenever there's a Hadamard matrix of order $m$. 
We expected to be able to get $k$ up to $\sqrt{m}$ or so since each random perturbation would increase the fraction of $+1$s by about $1/\sqrt{m}$. 
We also knew from ChatGPT (the free version) that Hadamard gaps are currently known to be much smaller than $\sqrt{m}$, so this would be a route to prove $\kappa(n)=1+o(1)$. 
However, we didn't have a clear picture of how the induction would work, so we hesitated to write the details.

Recently, I had upgraded to GPT-5 Pro.
I was throwing big open problems at it, but the problems were too hard (or I was bad at prompting), and frustratingly, Pro tended to push back on thinking about famous open problems.
(Maybe it was trained to save on compute?) 
Finally, it occurred to me that I should have Pro write exploratory code in support of our second main idea. 
After all, this is an intended use case of Pro.

\subsection{Exploratory code}

About a month prior, we launched a private table of putatively optimal approximate Hadamard matrices. 
My only surviving contributions were for $n\in\{3,5,6,9\}$.
But then I ran some code written by Pro, and it improved the $n=13$ case from $1.7677$ to $1.4433$. (!) 
Notably, I didn't tell Pro how it should perform the optimization (and I never bothered to find out what it did); it just selected an approach and then outperformed about a month's worth of expert searching. 
Sadly, the code didn't improve any other entries, so perhaps this was just a weak point in the table.

The drop from $1.7677$ to $1.4433$ was so big that we expected some hidden combinatorial structure in the matrix. 
So I asked Pro for an explanation involving combinatorial design, and it provided a plausible explanation in terms of the projective plane $\operatorname{PG}(2,3)$.
At this point, I was hoping for an infinite family, so I asked Pro, but other constructions involving projective planes have larger condition numbers; this was a fluke.
After more back and forth, Pro realized this matrix achieves equality in the \textit{Barba bound} for Hadamard's maximal determinant problem. 
By consulting a recent survey on this problem~\cite{BrowneEHC:21} (also pointed to by Pro), we identified the infinite family in Section~\ref{sec.explicit families} with condition number $\sqrt{2}+o(1)$ and improved the $n=25$ entry in our table.

(I wouldn't have expected any relationship between Hadamard's maximal determinant problem and approximate Hadamard matrices, but in hindsight, there's a nice analogy with energy minimization on the sphere.
In particular, there are special configurations of points that simultaneously minimize a diverse family of energies~\cite{CohnK:07}.
So while I can't prove a strong relationship between the determinant and the condition number, it still makes sense to evaluate the condition number of determinant maximizers on the off chance they're sufficiently universally optimal.)

\subsection{Proof discovery}

Part of me was hoping the exploratory code would inspire enough infinite families to render our first main idea moot.
But after exhausting that avenue, it was time to return to the induction.
Since it wasn't clear to us how the induction should go, I expected it to take a full day to work out one approach, only to see how it breaks. 
Maybe Pro could help navigate this for me?

I started by telling Pro a sketch of the result we already had, and I asked it to flesh out the details. 
I actually forgot to mention that we used matrix Bernstein, but it figured it out. 
A perfect reconstruction of the proof on the first try. 
So then I explained how we wanted to extend this result to get even smaller approximate Hadamard matrices, and I told it my vague concepts of a plan of how an induction might go. 
And it responded with a proof! 
But the proof was wrong. 
After lots of back and forth, it started to push back on the inductive approach entirely.
Instead, it wanted to push towards a ``one-shot'' approach, but I wasn't convinced. 

I asked how the one-shot approach would treat the $k=1$ case we already understood, and it described an expected matrix that was a rank-$1$ perturbation (involving the discarded row and column vectors) of the surviving $n\times n$ matrix (before normalization). 
But in general, we want this expected matrix to be a multiple of an orthogonal matrix, and that seemed difficult to me, so I asked a more general question (paraphrased):
\begin{quote}
Given an orthogonal matrix with block decomposition $\left[\begin{smallmatrix} A & B \\ C & D \end{smallmatrix}\right]$, how can you use $A$, $B$, $C$, and $D$ to construct an orthogonal matrix of the same size as $D$? 
\end{quote}
After some thought, it came up with 
\[
D+C(I-A)^{-1}B.
\]
I didn't believe it. 
I asked for a proof, and it gave a full page of calculations that I wasn't interested in checking. 
So I punched the formula into MATLAB with a random orthogonal matrix, and the result was orthogonal!

Where did this formula come from? 
Pro points to references from linear system theory, scattering theory, and operator theory (e.g.,~\cite{Alpay:02}), but I would never have thought to consult these corners of the literature.
Case in point, this exact formula for submatrix orthogonalization is the subject of a reference request on MathOverflow~\cite{Terrell:mo}, but no one offers a pointer to the existing literature.

Once I had this submatrix orthogonalization formula (see Lemma~\ref{lem.submatrix orthogonalization}), I was able to sketch a ``one-shot'' proof of the desired result to Pro, and it responded with all the details fleshed out (much like my experience with the original $k=1$ case). 
It's clear that Pro is much better at writing a proof from a sketch than from scratch. 
(Aren't we all?) 
But here's the most humbling thing about this experience: 
I don't know if I could've proven this result without the submatrix orthogonalization formula, and I don't think I would've come up with that formula on my own.

(More generally, how many open problems could be solved today if the experts knew the relevant results from other communities?
I'm optimistic that AI will help to close this gap.)

\subsection{Formalization}

The only theoretical component of this paper that came from AI is submatrix orthogonalization (Lemma~\ref{lem.submatrix orthogonalization}).
Out of an abundance of caution, we decided to formalize the proof of this result in Lean~\cite{MouraU:21}.
Instead of vibe coding a proof with ChatGPT as in~\cite{AlexeevM:25}, we adopted a much more user-friendly auto-formalization workflow.
Boris asked Pro to state the result in Lean, with a couple pointers to block matrix concepts like \texttt{fromBlocks}. 
After 9 minutes of thinking, it responded with a formal statement that looked very clean and clearly corresponded exactly to the intended result.
Boris then asked Aristotle~\cite{AchimEtal:25} to prove the result, offering no hints whatsoever. 
After 13 minutes, it responded with a formal proof, and it type-checked with no issues.
(The Lean code appears as an ancillary file of the arXiv version of this paper.)

At this point, we've verified Lemma~\ref{lem.submatrix orthogonalization} in four independent ways: (1) we checked it in MATLAB with random orthogonal matrices, (2) we identified references in the literature that state the result, (3) we wrote out a human-readable proof, and (4) we auto-formalized a proof in Lean.
The result is legit. 
I have no intuition for it, but I know it's true.

\section{Discussion}
\label{sec.discussion}

In this paper, we showed that the smallest-possible condition number of $n\times n$ approximate Hadamard matrices approaches $1$ as $n\to\infty$, and we presented some explicit constructions.
Several open problems remain, and in this section, we highlight a few of them.
Our first problem asks for the rate at which the best condition number approaches $1$:

\begin{problem}
What is the largest $\alpha$ for which $\displaystyle\kappa(n)=1+\frac{f(n)}{n^{\alpha}}$ for some subpolynomial $f$?
\end{problem}

We currently know that $\frac{17}{92}\leq\alpha\leq1$.
Better upper bounds on gaps between Hadamard matrices will increase this lower bound, but with our proof technique, the Hadamard conjecture only increases the lower bound to $\frac{1}{4}$.
Meanwhile, our explicit construction involving symmetric conference matrices suggests taking $\alpha$ to be $\frac{1}{2}$.

Next, we are interested in the optimal version of Dong and Rudelson's original result in~\cite{DongR:24}:

\begin{problem}
What is $\sup_n\kappa(n)$?
\end{problem}

Of course, our result $\kappa(n)\to1$ reduces this to a finite problem.
Sadly, it's not all that finite:
even assuming the Hadamard conjecture, a refinement of our upper bound on $\kappa(n)$ establishes that $\kappa(n)<2$ for every $n > 3.11 \times 10^6$.
For the record, the authors believe $\sup_n\kappa(n)=2$, and to prove this, it suffices to treat odd $n$, since taking a Kronecker product with a $2\times 2$ Hadamard matrix would cover the even case.
For larger values of $n$ that are not resolved by the asymptotic result, it might suffice to find circulant approximate Hadamard matrices, considering Steinerberger's main result in~\cite{Steinerberger:24}.

Unfortunately, we didn't find many explicit examples to help close this gap.
At the moment, we observe a ``$\sqrt{2}$ bottleneck'' in the case where $n$ is odd:

\begin{problem}
Is there an explicit infinite family of approximate Hadamard matrices of odd order with condition number less than $\sqrt{2}$?
\end{problem}

Strangely, none of the condition numbers in Table~\ref{table} are less than $\sqrt{2}$ for odd $n$.

\section*{Acknowledgments}

DGM thanks Stefan Steinerberger for introducing him to this problem.
BA and DGM thank OpenAI for access to GPT-5 Pro, and Harmonic for access to Aristotle.
JJ was supported by NSF DMS 2220320. 
DGM was supported by NSF DMS 2220304. 
The views expressed are those of the authors and do not reflect the official guidance or position of the United States Government, the Department of Defense, the United States Air Force, or the United States Space Force.

\end{document}